\newcommand{\ra}{\rightarrow}		
\newcommand{\by}[1]{\stackrel{#1}{\ra}}
		\newcommand{\wt}{\widetilde}
\newcommand{\iso}{\by \sim}
\newtheorem{theorem}{Theorem}[section]
\newtheorem{lemma}[theorem]{Lemma}
\newtheorem{corollary}[theorem]{Corollary}
\newcommand{\ga}{\alpha}	
		\newcommand{\gd}{\delta}
\newcommand{\gj}{\blacksquare}	
		\newcommand{\gs}{\sigma}
\newcommand{\gt}{\theta}
\newcommand{\BA}{\mbox{$\mathbb A$}}
	\newcommand{\BN}{\mbox{$\mathbb N$}}
\newcommand{\BQ}{\mbox{$\mathbb Q$}}
	\newcommand{\BZ}{\mbox{$\mathbb Z$}}
\newcommand{\CS}{\mbox{$\mathcal S$}}
	\newcommand{\p}{\mbox{$\mathfrak p$}}
\newcommand{\mq}{\mbox{$\mathfrak q$}}
	\newcommand{\op}{\mbox{$\oplus$}}
\newcommand{\Spec}{\mbox{\rm Spec\,}}	
\newcommand{\Aut}{\mbox{\rm Aut\,}}	
\newcommand{\Hom}{\mbox{\rm Hom\,}}	
\newcommand{\Um}{\mbox{\rm Um}}		\newcommand{\SL}{\mbox{\rm SL}}
\newcommand{\GL}{\mbox{\rm GL}}		
	        \newcommand{\E}{\mbox{\rm E}}
\begin{document}
\begin{center}
{\Large \bf Stability results for projective modules over Rees algebras}\\\vspace{.2in} {\large
  Ravi A. Rao and Husney Parvez Sarwar }\\
\vspace{.1in}
{\small
School of Mathematics, T.I.F.R. Mumbai, Navy Nagar, Mumbai - 400005, India;\;\\
    ravi@math.tifr.res.in, mathparvez@gmail.com}
\end{center}

\begin{abstract}
 We provide a class of commutative Noetherian domains $R$ of dimension $d$ such that
 every finitely generated projective $R$-module $P$ of rank $d$ splits off a free summand of rank one.
 On this class, we also show that $P$ is cancellative. At the end we give some applications to the 
 number of generators of a module over the Rees algebras.
 \end{abstract}
 
 {\bf Key Words:} Rees algebras, projective modules, free summand, cancellation.
 
 {\bf Mathematics Subject Classification 2010:} Primary: 13C10, 19A13; Secondary: 13A30.

\section{Introduction}

Let $R$ be a commutative Noetherian ring of Krull dimension $d$. A classical result of Serre \cite{Serre} says that 
every finitely generated projective $R$-module $P$ of rank $>d$ splits off a free summand. This is the  best possible
result in general as it is evidenced from  the well-known example of ``the tangent bundle over real algebraic sphere of dimension two".
Therefore the question  ``splitting off a free summand" becomes subtle when $rank(P)=d$. 
If $R$ is a reduced affine algebra over an algebraically
closed field, then for a rank $d$ projective $R$-module $P$, M.P. Murthy \cite{Mu94} defined an obstruction class $c_d(P)$ in
the group $F^dK_0(A)$. Further assuming $F^dK_0(A)$ has no $(d-1)!$ torsion, he proved that $c_d(P)=0$
if and only if $P$ splits off a free summand of rank one.

For a commutative Noetherian ring $R$ of dimension $d$, Bhatwadekar--Raja Sridharan
(\cite{BR98},\cite{BR00})
defined an obstruction group called Euler class group, denoted by $\E^d(R)$. Assume $\BQ \subset R$, then
given a projective $R$-module of rank $d$, Bhatwadekar--Raja Sridharan defined an obstruction class $e_d(P)$
and  proved $e_d(P)=0$ in $\E^d(R)$ if and only if $P$ splits off a free summand of rank one.
Later, for a smooth scheme $X$ of dimension $n$, Barge--Morel \cite{BM}
defined the Chow--Witt group $\wt{CH}^j(X)$ ($j\geq 0$) and  associated to each vector bundle $E$ of rank $n$ with trivial determinant an Euler class
$\wt{c_n}(E)$ in $\wt{CH}^n(X)$. Let $A$ be a smooth affine domain of dimension $n$ and $P$ a finitely generated projective $A$-module of rank $n$.
Then it was proved that $\wt{c_n}(P)=0$ if and only if $P\iso Q\op A$ for $n=2$ in \cite{BM} (see also  \cite{F08}), $n=3$ in
\cite{FS09} and $n\geq 4$ in \cite{M012}. 

A recent result of Marco Schlichting \cite{Marco16} proved a similar kind of result for a commutative Noetherian ring $R$
of dimension $d$ all of whose residue fields are infinite. Precisely, given a rank $d$ oriented projective $R$-module $P$,
he defined a class $e(P)$ in $H^d_{Zar}(R, \mathcal{K}_d^{MW})$ such that $e(P)=0$ if and only if 
 $P$ splits off a free summand of rank one.
 
 One of the aims  of this  article is to provide a class of examples of commutative Noetherian rings $R$ of dimension $d$
 such that every rank $d$ projective $R$-module splits off a free summand of rank one. We prove the following:
 
 \begin{theorem}\label{unim}
   Let $R$ be a commutative Noetherian domain of dimension $d-1$ ($d\geq 1$) and $I$ an ideal of $R$.
   Define $A:=R[It] $ or $R[It, t^{-1}]$ (note that $dim(A)\leq d$).
  Let $P$ be a projective $A$-module  of rank  $d$. Then  $P\iso Q\op A$ for some projective $A$-module $Q$.
 \end{theorem}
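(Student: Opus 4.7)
The plan is to produce a unimodular element in $P$; equivalently, to split off a free rank-one summand $P \cong Q \op A$. The strategy is to reduce $P$ modulo a carefully chosen ``irrelevant'' ideal $J \subset A$ such that $\dim(A/J) \le d-1$, apply Serre's classical splitting theorem on the quotient, and then globalise the resulting lift.

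For the Rees algebra $A = R[It]$, I take $J = (It)A$, so that $A/J \cong R$; for the extended Rees algebra $A = R[It, t^{-1}]$, I take $J = t^{-1}A$, so that $A/J \cong \mathrm{gr}_I(R)$. Either way, $A/J$ is Noetherian of dimension at most $d-1$, and so the rank-$d$ projective $(A/J)$-module $P/JP$ admits a unimodular element $\ol p$ by Serre. I lift $\ol p$ to some $p\in P$; the goal is then to show that, after possibly adding an element of $JP$, this $p$ is unimodular in $P$.

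Unimodularity of $p$ is a local condition on $\Spec A$. At a prime $\mathfrak q \supset J$, the ideal $JA_\mathfrak q$ lies in the Jacobson radical of the local ring $A_\mathfrak q$, so Nakayama's lemma forces $p$ to be a basis vector of the free module $P_\mathfrak q$. At a prime $\mathfrak q \not\supset J$, the localisation $A_\mathfrak q$ is a localisation of a Laurent polynomial ring over a Noetherian domain of dimension $\le d-1$: in the extended Rees case $A[1/t^{-1}] = R[t,t^{-1}]$ directly, while in the Rees case choosing any $a \in I$ with $at \notin \mathfrak q$ yields $A[1/(at)] = R[1/a][t, t^{-1}]$. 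On such Laurent polynomial rings, the Plumstead--Swan theorem guarantees that a rank-$d$ projective module splits off a free summand, since $d$ exceeds the dimension of the base domain.

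The main obstacle is to glue these two pictures into a single global unimodular element by a controlled modification of $p$ within the coset $p + JP$. The standard tool is a basic element / addition principle in the spirit of Eisenbud--Evans and Plumstead: using the dimension data just described, one defines a generalised dimension function on $\Spec A$ (taking value $\le d-1$ on $V(J)$ and $\le d-1$ on the Laurent-polynomial locus) and adjusts $p$ by an appropriate element of $JP$ to become basic at every prime outside $V(J)$, while the Nakayama argument keeps it basic on $V(J)$. Because $A \subset R[t, t^{-1}]$ is a domain and a projective module over a domain is torsion-free, a basic element is automatically unimodular; thus the modified $p$ delivers the required splitting $P \cong Q \op A$.
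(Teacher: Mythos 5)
Your overall architecture (split $\Spec(A)$ into $V(J)$ and its complement, handle each piece separately, then glue) is reasonable, but there is a genuine gap exactly where the difficulty of the theorem lives: the gluing. You assert that the complement of $V(J)$ carries a generalized dimension function bounded by $d-1$ because it is covered by spectra of Laurent polynomial rings over $(d-1)$-dimensional domains. But a (Laurent) polynomial ring over a $(d-1)$-dimensional domain is not known to admit a generalized dimension function bounded by $d-1$; already for $k[x][u^{\pm 1}]$ the natural candidate $\delta(\mq)=\dim\bigl(k[x]/(\mq\cap k[x])\bigr)$ violates the finiteness axiom, since every height-one prime $(f)$ with $f$ genuinely involving $u$ has $\delta=1$ and is therefore minimal for $<<$, and there are infinitely many of them. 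The theorems of Plumstead and Bhatwadekar--Lindel--Rao that produce unimodular elements at the critical rank over $B[u]$ or $B[u^{\pm1}]$ are not proved via a single dimension function: they are proved by precisely the kind of two-patch argument, with a transitivity theorem for the elementary group on unimodular rows over the overlap, that your sketch omits. In effect, your final paragraph invokes an ``addition principle'' that is essentially equivalent to the statement being proved on the open locus. The paper supplies the missing ingredient explicitly: it localizes at $as$ (with $a\in I$ a nonzero nonunit and $s$ chosen, via $S^{-1}A\cong R'[t]$ with $R'$ the fraction field, so that $P_{s}$ is free), making $P$ actually free on one patch; it uses the genuine generalized-dimension bound $\leq \dim(A)-1$ on $A_{1+asA}$ (every prime there either contains $as$ or has non-maximal height) to get a unimodular element on the other patch; and it then invokes Rao's transitivity theorem for the elementary group on unimodular rows over the overlap ring $T^{-1}R'[t]$, together with Roy's splitting of elementary automorphisms over the Cartesian square, to match the two local unimodular elements before patching. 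Nothing in your proposal plays the role of this matching step.

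A secondary error: the identification $A[1/(at)]=R[1/a][t,t^{-1}]$ for $A=R[It]$ is false. For $R=k[x]$ and $I=(x)$ one has $A=k[x,xt]$ and $A_{xt}=k[x][(xt)^{\pm1}]$, in which $x$ is not invertible, whereas $x$ is invertible in $k[x^{\pm1},t^{\pm1}]$. The correct statement is $A_{at}=R[I/a][(at)^{\pm1}]$, a Laurent polynomial ring over the affine blow-up algebra $R[I/a]$, which still has dimension $\leq d-1$, so this slip is repairable; it does not, however, rescue the main gap above. (Also, the implication ``basic everywhere $\Rightarrow$ unimodular'' holds here because $P$ is projective, not because $A$ is a domain.)
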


 In particular, if $\BQ \subset A$, then the obstruction class $e_d(P)$ defined by Bhatwadekar--Raja Sridharan \cite{BR00}
 is zero in $\E^d(A)$. Also, in the view of Schlichting's result \cite{Marco16}, 
 if we assume that all residue fields of $A$ are infinite,
 then $e(P)$ defined by Schlichting is zero in $H^d_{Zar}(A, \mathcal{K}_d^{MW})$. For $A=R[t]$ and for birational
 overrings of $R[t]$, a similar type of result is proved by Plumstead \cite{P} and Rao (\cite{Rao82}, \cite{RaoTh}) respectively.
 
 In this direction, a parallel problem is ``the cancellation problem". Let $P$ be a projective module over a commutative
 Noetherian ring $R$ of dimension $d$ such that $rank(P)>d$. Then Bass \cite{Bass64} proved that $P$ is cancellative
 i.e. $P\op Q \iso P'\op Q \Rightarrow P\iso P'$. Again this is the best possible result in general as it is
 evidenced by the same well-known example ``tangent bundle over the real algebraic sphere of dimension two". However Suslin 
 (\cite{Su77b}) proved that if $R$ is an affine algebra of dimension $d$ over an algebraically closed field,
 then every projective $R$-module of rank $d$ is cancellative. We enlarge the class of rings by proving the 
 following result.
 
 \begin{theorem}\label{cancel}
  Let $R$ be a commutative Noetherian domain of dimension $d-1$ ($d\geq 1$) and $I$ an ideal of $R$.
   Define $A:=R[It] $ or $R[It, t^{-1}]$.
  Then every finitely generated projective $A$-module  of rank  $d$ is cancellative.
 \end{theorem}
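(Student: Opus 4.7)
The plan is to deduce the cancellation theorem from Theorem~\ref{unim} together with a transitive action statement for elementary automorphisms on unimodular elements, following the template of Plumstead for $R[t]$ and Rao for birational overrings of $R[t]$.

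First, I would invoke the standard reduction: a projective $A$-module $P$ of rank $d$ is cancellative if and only if $\Aut(P \op A)$ acts transitively on $\Um(P \op A)$. Applying Theorem~\ref{unim} to $P$, write $P \iso Q \op A$ with $Q$ projective of rank $d-1$; then $P \op A \iso Q \op A^2$, so the cancellation problem reduces to: given any unimodular $(q, a_0, a_1) \in \Um(Q \op A^2)$, produce an automorphism of $Q \op A^2$ carrying it to $(0, 0, 1)$. Via standard transvection manipulations this is equivalent to proving that the elementary subgroup acts transitively on $\Um(Q \op A^2)$, and in the stably free case amounts to a Suslin-type stable range bound $sr(A) \leq d$ for $A$.

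Next, I would establish this transitivity via a local-global principle. For any maximal ideal $\mm$ of $A$ with $\dim A_{\mm} < d$, $Q_{\mm}$ is free and Bass's classical cancellation theorem supplies the required elementary transformation locally. These local transformations should then be glued to a single global automorphism by a Quillen--Suslin-style patching theorem adapted to $A$.

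The main obstacle lies in the maximal ideals of height exactly $d$ and in setting up the patching for the Rees algebra. For $R[t]$, Plumstead resolves this via the Suslin monic polynomial trick, which permits a dimension drop after inverting a monic polynomial in $t$. For Rees algebras the analogous device must exploit the graded structure: when $A = R[It, t^{-1}]$, $t^{-1}$ is a non-zerodivisor with $A/(t^{-1})$ of dimension at most $d-1$ and $A_{t^{-1}} = R[t, t^{-1}]$ a Laurent polynomial ring over $R$ (to which the Plumstead--Rao results already apply), and a similar device should handle $A = R[It]$ via its positive grading and specialization at $t$. Turning this into a rigorous Quillen patching argument, in the spirit of Rao's treatment of birational overrings of $R[t]$, is the step I expect to dominate the proof.
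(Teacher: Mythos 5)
Your overall strategy -- reduce cancellation to transitivity of automorphisms on unimodular elements, prove it on two localizations via a dimension drop, and glue by Quillen-type patching -- is the same template the paper follows, and your observation that for $R[It,t^{-1}]$ the element $t^{-1}$ is a non-zero-divisor with $A_{t^{-1}}=R[t,t^{-1}]$ and $\dim A/(t^{-1})\le d-1$ is a legitimate starting point for the extended Rees algebra. However, there are two concrete gaps. First, the local--global step as you state it does not work: Quillen patching of automorphisms requires a \emph{finite} (in fact two-element) localization cover $\{A_f,\,A_{1+fA}\}$, not local solutions at each maximal ideal of small height glued afterwards. The complement piece $A_{1+fA}$ is not handled by ``Bass locally'' but by Plumstead's generalized dimension: since $\dim A/(fA)\le d-1$, the generalized dimension of $A_{1+fA}$ is $\le d-1$, and Plumstead's Eisenbud--Evans theorem gives an \emph{elementary} automorphism over $A_{1+fA}$ taking $(b,p)$ to $(1,0)$. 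You never name this mechanism, and without it the ``maximal ideals of height exactly $d$'' you worry about are not an isolated obstacle -- they are dealt with automatically by the generalized dimension bound.

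Second, and more seriously, you have no working device for $A=R[It]$, and for both algebras you are missing the step that makes $P$ free on the inverted piece. The paper's key trick is: for a non-zero non-unit $a\in I$ one has $R[It]_a\cong R_a[t]$, an honest polynomial ring (because $IR_a=R_a$); and since $S^{-1}A\cong k[t]$ is a PID for $S$ the non-zero-divisors of $R$ (here the domain hypothesis enters), one can choose $s\in S$ with $P_{s}$ free. Inverting $f=as$ then gives a free module over a polynomial ring, where Suslin's theorem on the elementary action on unimodular rows over $R[t]$ applies. Freeness of $P_f$ is not a convenience: it is needed both to quote Suslin and to split the discrepancy automorphism $\eta$ over $A_{f(1+fA)}$ into a product of automorphisms fixing $(1,0)$ (the analogue of Quillen's splitting lemma), which is the heart of the patching. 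Your cover for $R[It,t^{-1}]$ by $t^{-1}$ gives $P_{t^{-1}}$ projective but not necessarily free over $R[t,t^{-1}]$, so neither Suslin's Theorem 7.2 nor the splitting step applies as stated; and ``specialization at $t$'' for $R[It]$ is not meaningful since $t\notin R[It]$. Finally, the initial reduction via Theorem \ref{unim} to $\Um(Q\op A^2)$ is an unnecessary detour (the paper works directly with $\Um(A\op P)$), and the hoped-for bound $sr(A)\le d$ is stronger than what is proved or needed.
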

 
  For $A=R[t]$ and for birational
 overrings of $R[t]$, a  similar type of result is proved by Plumstead \cite{P} and Rao (\cite{Rao82}, \cite{RaoTh}) respectively.
 
 The following result follows from our  result  Theorem \ref{cancel} and a result of Wiemers \cite[Theorem]{Wi1}.
 
 \begin{corollary}
  Let $R$ be a commutative Noetherian domain of dimension $d-1$ ($d\geq 1$) such that $1/d! \in R$ and $I$ an ideal of $R$.
   Define $A:=R[It] $ or $R[It, t^{-1}]$.
  Then every finitely generated projective $A[X_1,\ldots,X_n]$-module  of rank  $d$ is cancellative.
 \end{corollary}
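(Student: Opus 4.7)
The plan is to deduce the corollary directly by chaining Theorem \ref{cancel} with Wiemers' cancellation theorem \cite[Theorem]{Wi1}. Wiemers' result provides a mechanism to transfer cancellation of rank-$d$ projective modules from a commutative Noetherian base ring $B$ containing $1/d!$ to the polynomial extension $B[X_1, \ldots, X_n]$. Thus the strategy is to take $B = A$ and feed the ``base case'' into Wiemers' machine.

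First I would verify the two hypotheses that Wiemers' theorem requires of $B = A$. The Noetherianness of $A$ is immediate: since $R$ is Noetherian, $I$ is finitely generated, so $A = R[It]$ (respectively $R[It,t^{-1}]$) is a finitely generated $R$-algebra, hence Noetherian. The arithmetic assumption $1/d! \in A$ follows from $1/d! \in R \subseteq A$. For the cancellation input itself, namely that every finitely generated projective $A$-module of rank $d$ is cancellative, I would invoke Theorem \ref{cancel} directly (noting that $\dim A \leq d$, so rank $d$ is the relevant threshold). With these ingredients in place, Wiemers' theorem produces cancellation of rank-$d$ projective $A[X_1, \ldots, X_n]$-modules, which is exactly the statement of the corollary.

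The only real point to check carefully is that the precise form of \cite[Theorem]{Wi1} matches our setting, in particular that it accepts $A$ as a base ring without any regularity or normality assumption, only the numerical hypothesis $1/d! \in A$ and cancellation at rank $d$ over the base. Because the corollary is announced as a consequence of Theorem \ref{cancel} and the cited Wiemers result, no further argument beyond this citation check should be needed; there is no substantive obstacle of the kind encountered in proving the preceding theorems.
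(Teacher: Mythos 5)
Your proposal is correct and is essentially identical to what the paper does: the paper gives no argument for this corollary beyond asserting that it follows from Theorem \ref{cancel} together with Wiemers' theorem applied to the base ring $A$ (which is Noetherian of dimension $\leq d$ and contains $1/d!$), which is exactly the chaining you describe. Your added verification of the hypotheses of Wiemers' theorem is the only content a proof could have here, so there is nothing further to check.
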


As an application of our result, we prove the following result.

\begin{theorem}
 Let $R$ be a commutative Noetherian domain of dimension $d$ and $I$ an ideal of $R$. Let $M$ be  a finitely
 generated module over   $ A:=R[It]$ or $R[It, t^{-1}]$. 
 Then $M$ is generated by $e(M):= Sup_{\p}\{\mu_{\p}(M)+dim(A/\p)\}$
 elements.
\end{theorem}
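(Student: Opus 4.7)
The plan is to invoke the standard Forster--Eisenbud--Evans--Plumstead machinery. For a Noetherian ring $A$, the sole nontrivial input in that machinery is the existence of a unimodular element (equivalently, the splitting of a rank-one free summand) in every finitely generated projective $A$-module of rank $\ge \dim(A)$, together with cancellation at that rank. Since $A = R[It]$ (or $R[It,t^{-1}]$) is a subring of $R[t]$ and $\dim R[t] = d+1$, we have $\dim(A) \le d+1$. Applying Theorems~\ref{unim} and~\ref{cancel} to our domain $R$ of dimension $d$---i.e., taking their index ``$d$'' to be $d+1$---supplies precisely these inputs: every finitely generated projective $A$-module of rank $d+1$ splits off a free summand of rank one and is cancellative.

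The bound $\mu(M) \le e(M)$ is then proved by strong induction on $\mu(M)$. If $\mu(M) \le e(M)$ there is nothing to do. Otherwise, choose generators $m_1, \ldots, m_n$ of $M$ with $n = \mu(M) > e(M)$. The aim is to find $a_1, \ldots, a_{n-1} \in A$ such that
\[ m_n' \;:=\; m_n + \sum_{i<n} a_i m_i \]
is a \emph{basic element} of $M$, meaning its image in $M_\p/\p M_\p$ is part of a minimal generating set for every prime $\p$ in the support of $M$. For such $m_n'$ one has $e(M/Am_n') \le e(M) - 1$, since the local number of generators drops by one on the support of $M$; the inductive hypothesis applied to $M/Am_n'$ then yields $e(M)-1$ generators of the quotient, which lift to $e(M)$ generators of $M$.

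Producing the basic element $m_n'$ is the technical heart of the argument. Following the approach of Plumstead \cite{P} (see also \cite{RaoTh}), one stratifies $\Spec(A)$ by the codimension of the locus on which $m_n$ fails to be basic, and at each stratum reduces the problem to extracting a unimodular element from a projective $A$-module of rank $\ge \dim(A)$---which is precisely what Theorem~\ref{unim} (at parameter $d+1$) supplies. Theorem~\ref{cancel} then ensures that the stratum-wise modifications can be assembled coherently into a single global element without any hidden inflation of the number of generators. The main obstacle is this assembly across the strata, where the cancellation hypothesis is indispensable; once this is in place the remaining bookkeeping follows the Plumstead formalism verbatim, with no feature specific to the Rees algebra $R[It]$ needed beyond the stability inputs of Theorems~\ref{unim} and~\ref{cancel}.
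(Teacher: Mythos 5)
Your overall strategy---induct by finding a basic element $m_n'=m_n+\sum_{i<n}a_im_i$ of $M$ and passing to $M/Am_n'$---is the classical Eisenbud--Evans route, and it differs from what the paper actually does: the paper takes a surjection $A^n\surj M$ with kernel $K$, produces an element $x\in K$ that is \emph{unimodular in $A^n$}, writes $A^n\cong xA\op P$ so that $P$ surjects onto $M$, and then invokes the cancellation theorem (Theorem \ref{thm:canc}) to conclude $P\cong A^{n-1}$. Either architecture can work, but your write-up has a genuine gap at the one step that carries all the content: the production of the basic element. You assert that this ``reduces to extracting a unimodular element from a projective $A$-module of rank $\geq\dim(A)$, which is precisely what Theorem \ref{unim} supplies.'' That reduction is not valid: $M$ is an arbitrary finitely generated module, and the basic element theorem for modules is not a consequence of the unimodular element theorem for projective modules. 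The inputs actually needed, and the ones the paper uses, are (i) the Rees-algebra-specific fact that $S^{-1}A\cong k[t]$ for $S$ the set of non-zero-divisors of $R$ (Lemma \ref{mcset}), so that the relevant torsion-free module becomes free after inverting a single $s\in S$; (ii) the bound $\leq d$ on the \emph{generalized} dimension of $A_{1+sA}$ (Lemma \ref{lem:gen-dim-res}), which feeds Plumstead's basic element lemma (Lemma \ref{lem:plum}(1), the module statement, not the projective one); and (iii) a patching argument over the resulting Cartesian square. None of these follows from Theorems \ref{unim} and \ref{cancel}, so your closing claim that ``no feature specific to the Rees algebra is needed beyond the stability inputs'' is false.

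A second, smaller symptom of the same problem is your use of cancellation. In your quotient scheme, once $m_n'$ is basic you lift $e(M)-1$ generators of $M/Am_n'$ and adjoin $m_n'$; no cancellation is required anywhere, and cancellation certainly does not ``assemble the stratum-wise modifications''---patching of locally constructed basic/unimodular elements is achieved by the transitivity of the elementary group action on unimodular rows over the overlap (as in Lemma \ref{lem:gen-dim-res}(2) and \cite[Theorem 5.1(I)]{Rao82}), not by cancelling projective modules. In the paper's argument cancellation enters only at the very last step, to identify the complement $P$ of the split-off unimodular element with $A^{n-1}$. To repair your proof you would either have to prove the basic element theorem for finitely generated $A$-modules directly (via the generalized dimension apparatus and the two-piece cover $\Spec(A)=\Spec(A_{as})\cup\Spec(A_{1+asA})$), or switch to the paper's kernel-plus-cancellation argument.
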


Let $A$ be a domain of dimension $n$, $R=A[X_1,\ldots,X_m]$ and $I$ the ideal of $R$ generated by $(X_1,\ldots,X_m)$.
Then $R[It]=A[X_1,\ldots,X_m,X_1t,\ldots,X_mt]$. Note that in this case $R[It]$ becomes a monoid algebra $A[M]$,
where $M$ is the monoid generated by $(X_1,\ldots,X_m,X_1t,\ldots,X_mt)$. In this case Gubeladze \cite{G1} conjectured
that every projective $A[M]$-module of rank $>n$ splits off a free summand of rank one. In  Theorem \ref{unim}, we have verified it affirmatively  but our
rank-dimension condition is not optimal. We note that the second author and  Keshari \cite{KeS} studied the problem of existence of
unimodular elements over monoid algebras. But their results do not cover the above monoid algebra.

{\bf Acknowledgement:} We would like to thank the referee for carefully reading the paper and  some useful comments.

 \section{Notations, Rees algebras and some properties} 
   {\it Throughout the paper, we assume that all the modules are finitely generated.}
   
   Let $A$ be a commutative ring and $Q$ a  $A$-module. We say $p\in Q$ is unimodular
   if the order ideal $O_Q(p)=\{\phi(p) \,|\,\, \phi \in Q^*=\Hom(Q,A)\}$ equals
   $A$. Let $\p \in \Spec(A)$. An element $q\in Q$ is said to be a basic element of $Q$ at
   $\p$ if $q \notin \p Q_{\p}$. We say $q$ is a basic element of $Q$ if it is a basic element of $Q$
   at every prime ideal of $A$.  Let $\mu_{\p}(Q)$ denote the minimum number of generators of $Q_{\p}$
   over $A_{\p}$.
   Let $P$ be a projective $A$-module. We say $P$ is cancellative if
   $P\op Q\cong P' \op Q\Rightarrow P\cong P'$  for all  projective $A$-modules $P'$ and $Q$.
   
   The set of all unimodular elements in $Q$ is denoted by
   $\Um(Q)$. We write $\E_n(A)$ for the group generated by the set of all
   $n\times n$ elementary matrices over $A$ and $\Um_n(A)$ for
   $\Um(A^n)$. We denote by $\Aut_A(Q )$, the group of all $A$-automorphisms
   of $Q$.
   
   For an ideal $J$ of $A$, we denote by $\E(A \oplus Q, J)$, the subgroup
   of $\Aut_A (A\oplus Q )$ generated by  the automorphisms
   $\Delta_{a\phi}= \bigl( \begin{smallmatrix} 1 & a\phi \\ 0&id_Q
   \end{smallmatrix} \bigr)$ and $\Gamma_{q}= \bigl( \begin{smallmatrix}
   1 & 0 \\ q&id_Q
   \end{smallmatrix} \bigr)$ with $a\in J$, $\phi \in Q^*$ and $q\in JQ$. 
   Further, we shall write $\E(A\oplus Q)$ for $\E(A\oplus Q, A)$.
   We denote by $\Um(A\oplus Q, J)$, the set of all $(a,q) \in \Um(A\oplus
   Q )$ with $a \in 1 + J$ and $q \in JQ$.
   
   {\bf Generalized dimension:} Let $R$ be a commutative ring and $\CS \subset \Spec(R)$.
   Let $\gd: \CS\ra \BN\cup \{0\}$ be a function. Define a partial order on $\CS$
   as $\p<<\mq$ if $\p\subset \mq$ and $\gd(\p)>\gd(\mq)$. We say that $\gd$ is
   a generalized dimension function on $\CS$ if for any ideal $I$ of $R$,
   $V(I)\cap \CS$ has only a finite number of minimal elements with respect to 
   $<<$. We say that $R$ has the generalized dimension $d$ if $d=min_{\gd}(max_{\p\in \Spec(R)} \gd(\p))$. The notion
   of the generalized dimension was introduced by Plumstead in \cite{P}.
   
   For example, the standard dimension function $\gd(\p)=coheight(\p):=\dim(R/\p)$ is a generalized dimension function.
   Thus, the generalized dimension of $R$ is $\leq$ the Krull dimension of $R$.
   Observe that if $s\in R$ is such that $R/(s)$ and $R_s$ have the generalized dimension $\leq d$, then the generalized
   dimension of $R\leq d$. Indeed, if $\gd_1$ and $\gd_2$ are generalized dimension functions on $R/sR$ and $ R_s$ respectively
   with $\gd_i\leq d$. Then we define $\gd:\Spec(R)\ra \BN\cup \{0\}$ as follows $\gd(\p)=\gd_1(\p)$ if $s\in \p$ and
   $\gd(\p)=\gd_2(\p)$ if $s\notin \p$. Now clearly $\gd$ is a generalized dimension function on $R$ with $\gd \leq d$.
   
   We note down an example of 
   Plumstead \cite[Example 4]{P} where he has given a ring having generalized dimension $<$ Krull
   dimension. Take $A:=R[X]$, where $R$ is a ring having an element $s\in rad(R)$ with $\dim(R/sR)< \dim(R)$.
   Then Plumstead \cite{P} proved that the generalized dimension of $A<\dim(A)$.
   
   {\bf The Rees algebras and the extended Rees algebras:} Let $R$ be a commutative Noetherian ring of dimension $d$ and $I$
   an ideal of $R$. Then the algebra
   $$R[It]:=\{\sum_{i=0}^{n}a_it^i: n\in \BN, a_i\in I^i\}=\op_{n\geq0}I^nt^n$$ 
   is called the Rees algebra of $R$ with respect to $I$. Sometimes it is also called the blow-up algebra. These
   algebras arise naturally in the process of blowing-up a variety along a subvariety. In the case of an affine variety
   $V(I)\subset \Spec(R)$,  the blowing-up is the natural map from $Proj(R[It])\ra \Spec(R)$. It is a fact that
   dimension of $R[It]\leq d+1$ (see \cite[Theorem 1.3]{Va}). Further if $I$ is not contained in any minimal primes of $R$, then the dimension
   of $R[It]=d+1$ (see \cite[Theorem 1.3]{Va}). For further properties of $R[It]$, we refer the reader to \cite{Va}.
   
   One defines the extended Rees algebra $R[It, t^{-1}]$ with respect to an ideal $I$ of $R$ as  a subring of $R[t,t^{-1}]$
    as follows
   $$R[It, t^{-1}]=\{\sum_{i=-n}^{n}a_it^i: n\in \BN, a_i\in I^i\}=\op_{n\in \BZ}I^nt^n,$$
   where $I^n=R$ for $n\leq0$.
   It is easy to observe that $R[It, t^{-1}]$ is birational
   to $R[It]$, hence the dimension of $R[It, t^{-1}]\leq d+1$.

\section{Splitting off and cancellation results for  Rees algebras}
 
 \begin{lemma}\label{mcset}
 	Let $R$ be a commutative ring, $S\subset R$ be a multiplicative subset and $I\subset R$ an
 	ideal.  Then $S^{-1}(R[It])=S^{-1}R[(S^{-1}I)t]$.
 \end{lemma}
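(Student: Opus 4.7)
The plan is to realize both rings as subrings of the polynomial ring $(S^{-1}R)[t] = S^{-1}(R[t])$ and verify equality by double inclusion. First I would set up the ambient picture: since $R[It]\subset R[t]$ and localization is flat (so it preserves inclusions), we get $S^{-1}(R[It])\hookrightarrow S^{-1}(R[t])=(S^{-1}R)[t]$. The right-hand side $S^{-1}R[(S^{-1}I)t]$ sits inside $(S^{-1}R)[t]$ by definition. So it suffices to identify the two collections of elements inside $(S^{-1}R)[t]$.

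The key algebraic identity needed is $(S^{-1}I)^n = S^{-1}(I^n)$ for every $n\geq 0$, which follows immediately from the fact that localization commutes with finite sums and products of ideals (any element of $(S^{-1}I)^n$ is a finite sum of products $(a_1/s_1)\cdots(a_n/s_n)$ with $a_j\in I$, and clearing denominators puts it in $S^{-1}(I^n)$; the reverse inclusion is obvious).

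For the inclusion $S^{-1}(R[It])\subset S^{-1}R[(S^{-1}I)t]$, I would take a typical element $f/s$ with $f=\sum_{i=0}^n a_i t^i$, $a_i\in I^i$, $s\in S$, and rewrite it as $\sum_{i=0}^n (a_i/s)\,t^i$; each coefficient $a_i/s$ lies in $S^{-1}(I^i)=(S^{-1}I)^i$, so the element lies in $S^{-1}R[(S^{-1}I)t]$. For the reverse inclusion, I would take $g=\sum_{i=0}^n b_i t^i$ with $b_i\in (S^{-1}I)^i=S^{-1}(I^i)$, write each $b_i=a_i/s_i$ with $a_i\in I^i$ and $s_i\in S$, clear denominators with the common multiplier $s=\prod_i s_i\in S$ (using that $S$ is multiplicatively closed) to get $b_i=a_i'/s$ with $a_i'\in I^i$, and obtain $g=(\sum_i a_i'\, t^i)/s\in S^{-1}(R[It])$.

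No part of this is really an obstacle; the only point requiring a moment of care is the common-denominator manipulation together with the identity $(S^{-1}I)^n=S^{-1}(I^n)$, which is what makes the two descriptions of the localized Rees algebra match degree by degree. The injectivity of the natural map from $S^{-1}(R[It])$ into $(S^{-1}R)[t]$ is automatic from flatness of localization, so no separate argument is needed there.
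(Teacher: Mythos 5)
Your proof is correct and is essentially the same as the paper's: both arguments reduce to comparing the graded pieces via the identity $(S^{-1}I)^n=S^{-1}(I^n)$, the paper by invoking that localization commutes with direct sums in the decomposition $R[It]=\op_{n\geq 0}I^nt^n$, and you by the equivalent element-wise common-denominator computation inside $(S^{-1}R)[t]$.
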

 \begin{proof}
 	By definition $S^{-1}(R[It])=S^{-1}R \oplus S^{-1}(IR)t\oplus S^{-1}(IR)^2t^2\oplus \cdots$ and 
 	$$S^{-1}R[(S^{-1}I)t]=S^{-1}R \oplus S^{-1}I(S^{-1}R)t\oplus (S^{-1}I(S^{-1}R))^2t^2\oplus \cdots .$$
 	 Since localization commutes with direct sums,  we have 
 	$S^{-1}(R[It])=S^{-1}R[(S^{-1}I)t]$.
 	$\hfill \gj$
 	\end{proof}
 
  \begin{lemma}\label{lem:gen-dim}
 	Let $A$ be a commutative Noetherian ring of dimension $d\geq 1$ and let
 	$s$ be a non-zero-divisor of $A$. 
 	Then the generalized dimension of $A_{1+sA}$ is $\leq d-1$.
 \end{lemma}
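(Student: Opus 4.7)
The plan is to combine two observations already recorded in the preceding discussion: that the generalized dimension is bounded above by the Krull dimension, and that if there exists $s \in R$ for which both $R/(s)$ and $R_s$ have generalized dimension $\leq d$, then so does $R$. Applying the latter to $R := B := A_{1+sA}$ with the image of $s$, it suffices to show that the Krull dimensions of both $B/sB$ and $B_s$ are at most $d-1$.

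The key preliminary is that $s$ lies in the Jacobson radical of $B$: for any $x \in A$ and any $u = 1 + sc \in 1 + sA$, the element $1 + s\cdot(x/u) = (1 + s(c + x))/u$ is already a unit of $B$, so $s$ belongs to every maximal ideal of $B$. Granting this, any chain of primes of $B_s$ corresponds to a chain in $B$ whose members all avoid $s$; its top term is therefore non-maximal in $B$, so the chain extends by one more step, giving $\dim(B_s) \leq \dim(B) - 1 \leq d - 1$.

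For the quotient, a straightforward clearing-of-denominators argument shows that $s$ remains a non-zero-divisor in $B$: if $s(x/u) = 0$ in $B$, then $sxv = 0$ in $A$ for some $v \in 1 + sA$, whence $xv = 0$ in $A$ (since $s$ is a non-zero-divisor of $A$), and therefore $x/u = 0$ in $B$. Consequently $s$ lies in no minimal prime of $B$, and every prime $\mq$ of $B$ containing $s$ strictly contains some minimal prime $\p$; prefixing $\p$ to any chain of primes ascending from $\mq$ yields a chain of $B$ of length $\dim(B/\mq) + 1$, forcing $\dim(B/\mq) \leq \dim(B) - 1$. Taking the supremum over such $\mq$ gives $\dim(B/sB) \leq d - 1$, which completes both bounds and hence the proof via Plumstead's observation.

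The only slightly non-trivial input is the Jacobson-radical computation for $B$; everything else is routine bookkeeping, so no real obstacle is anticipated.
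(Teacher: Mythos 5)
Your proof is correct and rests on the same key point as the paper's: that $s$ lies in the Jacobson radical of $A_{1+sA}$ (the paper phrases this as ``every height-$d$, hence maximal, prime contains $s$'' via the same $1+sb$-is-a-unit computation). The only difference is bookkeeping: you conclude via the $R/(s)$--$R_s$ patching observation from Section 2, explicitly verifying $\dim(B/sB)\leq d-1$ along the way, whereas the paper partitions $\Spec(A_{1+sA})$ into $V(s)$ and the primes of height $<d$ and cites Plumstead's Example 2 --- these are two renderings of the same argument, and yours is the more self-contained.
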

 \begin{proof}
 	Let $\mathcal{P}_1$ be the set of all primes of  $A_{1+sA}$ which contains $s$ and 
 	$\mathcal{P}_2:=\{\mathfrak{p} \in {\rm Spec}(A_{1+sA}): ht(\mathfrak{p)}<d\}$. We claim that
 	$${\rm Spec}(A_{1+sA})=\mathcal{P}_1\cup \mathcal{P}_2.$$ 
 	Let $\mathfrak{p}\in {\rm Spec}(A_{1+sA})$ be a prime ideal of height $d$. Hence $\mathfrak{p}$ is
 	a maximal ideal of $A_{1+sA}$. We claim that $s\in \p$. Suppose not, then $\p+(s)=(1)$.  This implies
 	that there exists $p\in \p$ such that $ap= 1+sb$. But $1+sb$ is a unit in $A_{1+sA}$ which is a 
 	contradiction to the fact $1+sb=pa\in \p$.
 	This establishes the claim.
 	Now following \cite[Example 2]{P},
 	we get that the generalized dimension of $A_{1+sA}$ is $\leq d-1$.	
 	$\hfill \gj$
 \end{proof}
 \begin{lemma}(Plumstead \cite{P})\label{lem:plum}
 	Let A be a commutative Noetherian ring of generalized dimension $d$ and $P$ a projective
 	$A$-module of rank $\geq d+1$. Then 
 	
 	$(1)$
 	$P$ has a unimodular element.  
 	More generally, if $M$ is  a finitely generated $A$-module such that $\mu_{\p}(M)\geq d$
 	for all $\p \in {\rm Spec}(A)$, then $M$ has a basic element.
 	
 	$(2)$ $P$ is cancellative. In fact $\E(A\oplus P )$ acts transitively on $\Um(A\oplus P)$. 
 \end{lemma}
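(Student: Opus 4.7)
My plan is to adapt the Eisenbud--Evans basic-element machinery to Plumstead's partial order $<<$, which is the whole point of introducing generalized dimensions.

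For $(1)$, I would first reduce the unimodular-element statement to the basic-element statement: a projective $P$ of rank $\geq d+1$ satisfies $\mu_{\p}(P)=\mathrm{rank}(P_{\p})\geq d+1$ at every prime $\p$, and any basic element $p\in P$ of a projective module is automatically unimodular, since the order ideal $O_P(p)$ lies in no prime and hence equals $A$. So it suffices to produce a basic element of $M$. I would choose generators $m_1,\ldots,m_n$ of $M$ and seek a basic element of the form $m:=m_n + a_1 m_1+\cdots+a_{n-1}m_{n-1}$. The set of primes where $m$ fails to be basic is the zero locus $V(J)$ of some ideal $J\subset A$, and by the defining property of generalized dimension, $V(J)\cap \Spec(A)$ has only finitely many $<<$-minimal elements. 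I would then modify the coefficients $a_i$ inductively, treating these $<<$-minimal primes one at a time: the hypothesis $\mu_{\p}(M)\geq d$ combined with $\gd(\p)\leq d$ supplies enough local freedom at each such $\p$ to achieve basicness there, while $<<$-minimality guarantees the adjustments do not disturb the primes already handled.

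For $(2)$, I would show $\E(A\op P)$ acts transitively on $\Um(A\op P)$; cancellation of $P$ is a standard formal consequence. Given $(a,p)\in \Um(A\op P)$ with $\mathrm{rank}(P)\geq d+1$, I would apply part $(1)$ to the module $P$ itself to obtain a unimodular $p_0\in P$. A $\Gamma$-move $(a,p)\mapsto (a,p+aq)$ with $q$ chosen so that $p+aq\equiv p_0\pmod{aP}$ makes the second coordinate unimodular; then a $\Delta$-move with a dual splitting of $p+aq$ reduces the first coordinate to a unit, and further elementary moves reduce $(a,p)$ to $(1,0)$.

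The main obstacle is the bookkeeping in the Eisenbud--Evans induction under $<<$: one has to verify that the finiteness of $<<$-minimal elements of $V(J)$ genuinely replaces Noetherian induction on height in the classical Serre--Bass proof, and that the coefficient adjustments made at one $<<$-minimal prime really do preserve basicness at all $\p$ previously handled (those with smaller $\gd$ value above). This combinatorial point is the entire content of Plumstead's innovation; once it is secured the rest of the argument is a routine transcription of the classical unimodular-element and transitivity proofs.
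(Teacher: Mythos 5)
Your part (1) is a sketch of Plumstead's own argument; the paper does not reprove it but simply cites \cite{P} (page 1421), and you yourself concede that the key combinatorial point --- that the finitely many $<<$-minimal primes of $V(J)$ can be handled inductively without disturbing earlier stages --- is left unverified. That is acceptable as an outline of the citation, but it is the entire content of the result, so as written (1) is not yet a proof.

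The genuine gap is in part (2). You propose to obtain a unimodular $p_0\in P$ from part (1) and then ``choose $q$ so that $p+aq\equiv p_0 \pmod{aP}$.'' But $p+aq\equiv p\pmod{aP}$ for \emph{every} $q\in P$, so your condition does not depend on $q$ at all: it asserts $p\equiv p_0\pmod{aP}$, which is false in general. Moreover, even if $p+aq$ were congruent mod $aP$ to a unimodular element of $P$, that would only give $O_P(p+aq)+aA=A$, i.e.\ exactly the unimodularity of $(a,p+aq)$ in $A\op P$ that you already have --- not unimodularity of $p+aq$ in $P$. What is actually needed is the relative (``addition'') form of the Eisenbud--Evans theorem in the generalized-dimension setting: for $(a,p)\in \Um(A\op P)$ with $\mathrm{rank}(P)\geq d+1$ there exists $q\in P$ such that $p+aq\in \Um(P)$. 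This is a statement about the coset $p+aP$ and is strictly stronger than the mere existence of some unimodular element of $P$; it does not follow from part (1) applied to $P$, and it is precisely what the paper invokes from \cite{P} (the version in \S 1 there). Once such a $q$ is available, your concluding elementary moves are correct, and the paper records them explicitly: choosing $\psi\in P^*$ with $\psi(p+aq)=1$, one has $\Gamma_{-aq-p}\Delta_{\psi}\Delta_{-a\psi}\Gamma_{q}(a,p)^t=(1,0)^t$, and cancellativity of $P$ then follows formally from the transitivity of $\E(A\op P)$ on $\Um(A\op P)$.
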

 \begin{proof}
 	This is an observation made by Plumstead in \cite[page 1421, paragraph 4]{P} except part $(2)$
 	second statement. For this, let $(a,p)\in \Um(A\oplus P)$. 
 	By Eisenbud--Evans Theorem (see the version in \cite[\S 1]{P}),
 	there exists an element $q\in P$ such that $p+aq$ is a unimodular element. Hence there exists $\psi \in P^*$ such that $\psi(p+aq)=1$.
 	Now we have  $\Gamma_{-aq-p}\Delta_{\psi}\Delta_{-a\psi}\Gamma_{q}(a,p)^t=(1,0)^t$ ($v^t $=transpose of the vector $v$). This finishes the proof.
 	$\hfill \gj$
 \end{proof}
 
 \begin{lemma}\label{lem:gen-dim-res}
 	Let $R$ be a commutative Noetherian ring of dimension $d$ and let $A:=R[It]$ or $R[It,t^{-1}]$.
 	Let  $s$ be a non-zero-divisor of $R$ and $P$ a projective $A_{1+sA}$-module of rank $\geq d+1$. Then 
 	
 	$(1)$ $P$ has a unimodular element. More generally, if $M$ is  a finitely generated $A$-module such that $\mu_{\p}(M)\geq d$
 	for all $\p \in {\rm Spec}(A)$, then $M$ has a basic element.
 	
 	$(2)$ $P$ is cancellative. More generally, $\E(A\oplus P )$ acts transitively on $\Um(A\oplus P)$.

 \end{lemma}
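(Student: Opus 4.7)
The plan is to reduce the statement to Plumstead's Lemma~\ref{lem:plum} by computing the generalized dimension of $A_{1+sA}$, exactly as Lemma~\ref{lem:gen-dim} does but applied to the Rees or extended Rees algebra in place of $R$.

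First I would verify the mild preliminary point that $s$ is a non-zero-divisor not just of $R$ but of $A$. Since $R[It]\subseteq R[t]$ and $R[It,t^{-1}]\subseteq R[t,t^{-1}]$, and since $s$ being a non-zero-divisor of $R$ implies it is a non-zero-divisor of $R[t]$ and of $R[t,t^{-1}]$ (coefficientwise), the same follows for the subrings $A=R[It]$ and $A=R[It,t^{-1}]$. Next, by \cite[Theorem 1.3]{Va} (quoted in Section~2 of the paper), $\dim(A)\leq d+1$ in either case.

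Now I would apply Lemma~\ref{lem:gen-dim} directly to the ring $A$ (with the integer $d+1$ in the role of $d$): since $s$ is a non-zero-divisor of $A$ and $\dim(A)\leq d+1$, the generalized dimension of $A_{1+sA}$ is at most $(d+1)-1=d$. Here I use Lemma~\ref{mcset} implicitly, to interpret $A_{1+sA}$ as a localization of the Rees (or extended Rees) algebra by the multiplicative set $1+sA$ — the ring-theoretic object to which Lemma~\ref{lem:gen-dim} applies without change.

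With $A_{1+sA}$ now known to have generalized dimension $\leq d$, both statements follow immediately from Lemma~\ref{lem:plum}: the rank hypothesis $\mathrm{rank}(P)\geq d+1$ becomes the required ``rank exceeds generalized dimension'' hypothesis, so $P$ has a unimodular element and the Eisenbud--Evans version at the end of Lemma~\ref{lem:plum} gives the basic element statement for $M$, while part (2) of Lemma~\ref{lem:plum} gives cancellation and the transitive action of $\E(A\oplus P)$ on $\Um(A\oplus P)$. The only subtle point, and hence the only real obstacle, is confirming that $s$ remains a non-zero-divisor after passing to the Rees construction; everything else is a direct application of the earlier lemmas.
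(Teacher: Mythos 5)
Your proposal is correct and follows the paper's own argument exactly: the paper likewise deduces the lemma directly from Lemma~\ref{lem:gen-dim} (applied to $A$, whose dimension is at most $d+1$) together with Plumstead's Lemma~\ref{lem:plum}. Your extra remarks — that $s$ stays a non-zero-divisor in $A\subseteq R[t]$ or $R[t,t^{-1}]$, and the dimension bound from \cite[Theorem 1.3]{Va} — are exactly the implicit checks the paper leaves to the reader.
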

 \begin{proof}
 	This  is a direct consequence of  Lemma \ref{lem:gen-dim} and  Lemma \ref{lem:plum}.
 	$\hfill \gj$
 \end{proof}

\begin{theorem}\label{thm:unimodular}
 Let $R$ be a commutative Noetherian domain of dimension $d$ and $I$ an ideal of $R$.  
 
 $(1)$ Let $P$ be a projective module over the Rees algebra $R[It]$ such that the rank of $P$ is $>d$. Then  $P$
 has a unimodular element.
 
 $(2)$ Let $Q$ be a projective module over the extended Rees algebra $R[It, t^{-1}]$ such that the  rank
  of  $Q$ is $>d$. Then $Q$ has a unimodular element.
\end{theorem}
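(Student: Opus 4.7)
The plan is to establish that $A$ itself has generalized dimension $\leq d$ and then invoke Plumstead's theorem (Lemma~\ref{lem:plum}(1)) to conclude directly that $P$ has a unimodular element.

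If $I=0$ the claim is trivial: $A$ equals $R$ (Rees case) or $R[t^{-1}]$ (extended Rees case), both of generalized dimension $\leq d$, so Lemma~\ref{lem:plum}(1) applies at once. Assume $I\neq 0$ and fix a nonzero $s\in I$. Since $s/1$ is both an element of $IR_s$ and a unit in $R_s$, we have $IR_s=R_s$, and Lemma~\ref{mcset} identifies $A_s$ with $R_s[t]$ in the Rees case or with $R_s[t,t^{-1}]$ in the extended Rees case. Plumstead's result on polynomial extensions (respectively Rao's extension to birational overrings of $R_s[t]$) then gives generalized dimension of $A_s\leq\dim R_s\leq d$. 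On the other hand, $A$ is a Noetherian domain with $\dim A\leq d+1$ (by \cite[Theorem 1.3]{Va}), and $s$ is a nonzero element of $A$; any chain of primes in $A/sA$ lifts to a chain in $A$ whose smallest prime is nonzero, so it admits a proper extension by the zero ideal. This yields $\dim(A/sA)\leq\dim A-1\leq d$, and the coheight function provides a generalized dimension function on $A/sA$ of range $\leq d$. By the patching observation recalled in Section~2, the generalized dimension of $A$ is $\leq d$.

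Once this bound is in place, Plumstead's theorem (Lemma~\ref{lem:plum}(1)) applied to $P$ of rank $\geq d+1$ over $A$ immediately produces a unimodular element. Both parts of the theorem are handled uniformly; the only difference lies in the shape of $A_s$, where Rao's result takes over for Plumstead's in the extended Rees case.

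The expected main obstacle is the generalized dimension bound on $A$, which hinges on two ingredients: the identification of $A_s$ as a polynomial or Laurent polynomial ring over $R_s$ so Plumstead and Rao become available, and the strict drop $\dim(A/sA)<\dim A$ forced by the domain structure of $A$. Neither requires heavy machinery, but both are essential to applying Plumstead's patching principle to assemble a single generalized dimension function on $A$. Lemma~\ref{lem:gen-dim-res} plays no direct role in this approach, though it will be crucial for Theorem~\ref{cancel}; if one wished to use it, an equivalent route is to extract from it a $p_2\in P$ with $\phi(p_2)\in 1+sA$, observe that $(s,p_2)\in\Um(A\oplus P)$, and apply the Eisenbud--Evans step from the proof of Lemma~\ref{lem:plum}(2) (which again needs the generalized dimension bound on $A$).
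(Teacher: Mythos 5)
Your plan hinges on the assertion that $A$ itself has generalized dimension $\leq d$, obtained by patching the coheight function on $V(sA)$ with a generalized dimension function on $\Spec(A_s)\cong\Spec(R_s[t])$ bounded by $\dim R_s$. That second ingredient is the gap: neither Plumstead nor Rao proves that a polynomial (or Laurent polynomial) ring $B[t]$ has generalized dimension $\leq \dim B$, and the statement is false in general with the definition used in this paper. For instance, if $k$ is a field, a generalized dimension function on $\Spec(k[t])$ bounded by $0$ would have to be identically zero, so the order $<<$ would be trivial and every maximal ideal would be a minimal element of $V(0)\cap\Spec(k[t])$ --- infinitely many of them. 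Hence the generalized dimension of $k[t]$ is $1=\dim k[t]$, not $\dim k$. The same obstruction applies to $R_s[t]$ whenever $\dim R_s=\dim R=d$ (which is the typical case for a nonzero $s\in I$): nothing forces its generalized dimension below $d+1$. Plumstead's genuine drop in generalized dimension (his Example 4, and Lemma \ref{lem:gen-dim} here) occurs only on rings in which every prime of maximal height contains a fixed non-zero-divisor, such as $A_{1+sA}$; it is not available on $A_s$, and consequently not on $A$.

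Once that premise fails, your proof produces nothing, because Lemma \ref{lem:plum}(1) cannot be applied to $A$ globally. The workable route --- and the one the paper takes --- is forced to be local-global: choose $s$ with $P_s$ free (using that $S^{-1}A\cong R'[t]$ is a PID) and $a\in I$ nonzero, get a unimodular element of $P_{as}$ from freeness (no dimension argument there), get one in $P_{1+asA}$ from Lemma \ref{lem:gen-dim-res}, and then \emph{glue the two unimodular elements}. The gluing is the real content: it needs $\E_{d+1}$ to act transitively on $\Um_{d+1}$ of the overlap ring $B=(R_{as}[t])_{1+asR[It]}$, which fails the Bass stable-range bound and requires Rao's theorem for overrings of polynomial rings over the $(d-1)$-dimensional base $R_{as(1+asR)}$, followed by Roy's splitting of the elementary matrix across the patching square. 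None of this appears in your proposal, and your alternative route at the end again invokes ``the generalized dimension bound on $A$,'' so it inherits the same gap.
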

\begin{proof}
$(1)$
Let $A=R[It]$. Since $R$ is a domain, so is $R[t]$. Since $A$ is a subring of $R[t]$, we conclude
that $A$ is a domain.
If $I=(0)$, then $A=R$. In this case, the result follows from a classical result of Serre \cite{Serre}. 
If $I=(1)$, then $A=R[t]$. In this case, the result follows from \cite[Corollary 4]{P}.
So, we assume that $(0)\neq I\neq (1)$. We already noted that $dim(A)\leq d+1$.
In fact here $dim(A)=d+1$ since $I\neq 0$. Hence in the view of a classical
result of Serre \cite{Serre}, we only have to consider the case when the rank of $P$ is $d+1$.
 
 Let $S$ be the set of all non-zero-divisors of $R$. Then by Lemma \ref{mcset}, we have 
  $S^{-1}R[It]\cong R'[t]$, where $R'$ is
 a field. Therefore $S^{-1}P$ is a free $S^{-1}A$-module. Since $P$ is finitely generated, there exists $s\in S$ such that $P_s$ is a free $A_s$-module.
 
Let $a$ be  a non-zero non-unit element of $I$. Let us consider the following commutative diagram of rings

\[
   	\xymatrix{
   		R[It] \ar@{->}[r]^{p_1}
   		\ar@{->}[d]^{p_2}
   		&R[It]_{as}\cong R_{as}[t]
   		\ar@{->}[d]^{j_1}
   		\\
   		R[It]_{1+asR[It]} \ar@{->}[r]^{j_2}
   		&(R_{as}[t])_{1+asR[It]}.
   	}
   	\]
   	Since $a\in I$, we have $IR_a=R_a$. Hence we get  the isomorphism $R[It]_{as}\cong R_{as}[t]$ 
   	using Lemma \ref{mcset}. 
   	It is easy to see that ${\rm Spec}(R[It])={\rm Spec}(R[It]_{as}) \cup {\rm Spec}(R[It]_{1+asR[It]})$.
   	Hence the above commutative diagram of rings is a localization Cartesian square of rings.
Since $P_{as}\cong R[It]_{as}^{d+1}$, $P_{as}$ has a unimodular element $p_1$. 
Since $\dim(R[It])\leq d+1$, $\dim(R[It]/asR[It])\leq d$ (recall that $as$ is  a non-zero-divisor in $R[It]$  since $R[It]$ is a domain). By Lemma \ref{lem:gen-dim-res},
$P_{1+asR[It]}$ has a unimodular element $p_2$. Hence we can write
$P_{1+asR[It]}\cong R[It]_{1+asR[It]}\op Q$,
where $Q$  is a  projective module over $R[It]_{1+asR[It]}$.

Observe that $B:=(R_{as}[t])_{1+asR[It]}=T^{-1}R'[t]$,
where $R'=R_{as(1+asR)}$ is of dimension $d-1$ which follows from the arguments of Lemma \ref{lem:gen-dim} and $T\subset R$ is a multiplicatively closed set. Hence  we have $P_{as(1+asR[It])}\cong B^{d+1}$. Let the images of $p_1$ and $p_2$ be the unimodular
 vectors $\overline u$ 
and $\overline v$ respectively.
By a result of Rao \cite[Theorem 5.1(I)]{Rao82} (see also \cite[Theorem 6.2]{BRoy}), there exists $\gs\in \E_{d+1}(B)$ such that
 $\gs(\overline{u})=(\overline{v})$. Now by \cite[Corollary 3.2]{Roy82} (see for more explanations \cite[Proposition 3.2]{BLR}), we can write
$\gs=(\gs_1)_{as}(\gs_2)_{1+asR[It]}$, where $\gs_1\in \E_{d+1}(R[It]_{as})$, $\gs_2\in \E(R[It]_{1+asR[It]}\op Q)$.
Hence suitably changing $p_1$, $p_2$, we can assume that $\overline{u}=\overline{v}$.
 Therefore by a sheaf patching on a two cover, we get
a unimodular element $p$ in $P$. Now we explain this  more algebraically. The above Cartesian
square of rings induces the following Cartesian square of projective modules

\[
\xymatrix{
	P \ar@{->}[r]^{p_1}
	\ar@{->}[d]^{p_2}
	&P_{as}
	\ar@{->}[d]^{j_1}
	\\
	P_{1+asR[It]} \ar@{->}[r]^{j_2}
	&P_{as(1+asR[It])}.
}
\]
Having a unimodular element in $P$ is equivalent to an existence of a surjective 
homomorphism $\phi: P \rightarrow R[It]$. The above arguments show that we have a surjection locally, hence by the Cartesian square diagram, we get
a surjection globally.
 This finishes the proof of $(1)$.

$(2)$ Proof of this part is  a verbatim copy of the proof of Part $(1)$.
$\hfill \gj$
\end{proof}


We prove the following criterion for cancellation.

\begin{lemma}\label{lem:criteria}
 {\bf (Criterion for cancellation)} Let $R$ be a commutative Noetherian ring of dimension $d$ and $P$ a projective
 $R$-module of rank $n$. Let $s,t\in R$ such that $sR+tR=R$. 
 Let $(b,p)\in \Um(R\op P)$.
 Assume that there exist
 $\gs_1\in \Aut(R_s\op P_s)$, $\gs_2\in \Aut(R_t\op P_t)$ such that
 $((b,p)_{s})\gs_1=(1,0)$,  $((b,p)_t)\gs_2=(1,0)$ respectively.
 
 $(1)$  If we  define $\eta=(\gs_1)_t^{-1}(\gs_2)_{s}$, then
 $\eta= \bigl( \begin{smallmatrix} 1 & 0 \\ * & \gt
\end{smallmatrix} \bigr)$, where $\gt \in \Aut{(P_{st})}$.

$(2)$ Further if 
  $\eta=(\eta_2)_t(\eta_1)_s$, where $\eta_2 = \bigl( \begin{smallmatrix} 1 & 0 \\ *_2 & \gt_s
 \end{smallmatrix} \bigr) \in \Aut(R_{s}\op P_{s})$ and  $\eta_1=  \bigl( \begin{smallmatrix} 1 & 0 \\ *_1 & \gt_t
 \end{smallmatrix} \bigr) \in \Aut(R_{t}\op P_{t})$, then there exists $\phi \in \Aut(R\op P)$ such that $(b,p)\phi=(1,0)$.
\end{lemma}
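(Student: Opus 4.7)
The plan is to establish part $(1)$ by a direct matrix computation, and to deduce part $(2)$ via a Zariski patching argument on the cover $\{s,t\}$ of $\Spec R$. For $(1)$, I first localize $(b,p)_s \gs_1 = (1, 0)$ further at $t$ and $(b,p)_t \gs_2 = (1, 0)$ further at $s$, obtaining $(b,p)_{st} (\gs_1)_t = (1,0) = (b,p)_{st} (\gs_2)_s$. The first identity gives $(1, 0) (\gs_1)_t^{-1} = (b,p)_{st}$, which composed with the second yields $(1, 0) \eta = (1, 0)$ in $R_{st} \op P_{st}$. Writing $\eta$ as a $2\times 2$ block matrix acting on row vectors, this says the top row of $\eta$ is $(1, 0)$, so $\eta = \bigl( \begin{smallmatrix} 1 & 0 \\ * & \gt \end{smallmatrix} \bigr)$ for some $\gt \in \End(P_{st})$. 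Since $\eta$ is an automorphism acting as the identity on the summand $R_{st} \op 0$, it descends to an automorphism of the quotient $P_{st}$, which is exactly $\gt$; hence $\gt \in \Aut(P_{st})$.

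For $(2)$, rewrite the hypothesis $(\gs_1)_t^{-1} (\gs_2)_s = (\eta_2)_t (\eta_1)_s$ as
$$(\gs_1 \eta_2)_t = (\gs_2 \eta_1^{-1})_s \quad \text{in} \quad \Aut(R_{st} \op P_{st}).$$
Set $\tau_s := \gs_1 \eta_2 \in \Aut(R_s \op P_s)$ and $\tau_t := \gs_2 \eta_1^{-1} \in \Aut(R_t \op P_t)$; these agree after further localization to $R_{st}$. Because $sR + tR = R$, the open sets $\Spec R_s$ and $\Spec R_t$ form a Zariski cover of $\Spec R$, and by the sheaf property of $R\op P$ and its automorphism functor, $\tau_s$ and $\tau_t$ glue uniquely to some $\phi \in \Aut(R \op P)$ with $\phi_s = \tau_s$ and $\phi_t = \tau_t$.

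Finally, I verify $(b, p) \phi = (1, 0)$ locally. On $R_s$, $(b,p)_s \phi_s = (b,p)_s \gs_1 \eta_2 = (1, 0) \eta_2 = (1, 0)$ since $\eta_2$ has top row $(1, 0)$; on $R_t$, similarly $(b,p)_t \phi_t = (b,p)_t \gs_2 \eta_1^{-1} = (1, 0) \eta_1^{-1} = (1, 0)$, because $\eta_1^{-1}$ also has top row $(1, 0)$ as the inverse of a block matrix of that shape. Since $(b,p)\phi - (1,0) \in R \op P$ vanishes on a Zariski cover by a coprime pair, it vanishes globally.

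The only real technical point is the patching step in $(2)$, but this is routine once the hypothesis is rearranged into compatible cover data; the substantive content of the lemma is that the specific block form assumed for $\eta_1$ and $\eta_2$ is exactly what is needed both so that the identity $\eta = (\eta_2)_t(\eta_1)_s$ can be reorganized into gluing data, and so that the resulting global $\phi$ automatically sends $(b,p)$ to $(1,0)$.
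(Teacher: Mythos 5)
Your proof is correct and takes essentially the same route as the paper: part $(1)$ by computing $(1,0)\eta=(1,0)$ and reading off the block form, and part $(2)$ by gluing $\gs_1\eta_2$ and $\gs_2\eta_1^{-1}$, which agree on $R_{st}$ precisely because of the hypothesis $\eta=(\eta_2)_t(\eta_1)_s$. Your added justifications (that $\gt$ is an automorphism via the induced map on the quotient, and that $\eta_1^{-1}$ still fixes $(1,0)$) are accurate refinements of details the paper leaves implicit.
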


\begin{proof}
	For $(1)$, we observe that
 $(1,0)\eta=(1,0)$. Hence it is easy to see that
  $\eta= \bigl( \begin{smallmatrix} 1 & 0 \\ * & \gt
\end{smallmatrix} \bigr)$, where $\gt \in \Aut(P_{st})$.
 Consider the following Cartesian square
 \[
   	\xymatrix{
   		R \ar@{->}[r]^{p_1}
   		\ar@{->}[d]^{p_2}
   		&R_s
   		\ar@{->}[d]^{j_1}
   		\\
   		R_t \ar@{->}[r]^{j_2}
   		& R_{st}.     
   	}
   	\]
  
 We set  $\gs_s=\gs_1\eta_2$ and $\gs_t=\gs_2\eta_1^{-1}$. Then we observe that $((b,p)_{s})\gs_s=(1,0)$,  $((b,p)_t)\gs_t=(1,0)$
and $(\gs_s)_t=(\gs_t)_s$ (recall that $\eta=(\gs_1)_t^{-1}(\gs_2)_{s}=(\eta_2)_t(\eta_1)_s$).
Now by
a standard  patching argument, we will get an automorphism $\phi\in \Aut(R\op P)$ such that $(b,p)\phi=(1,0)$. Now we explain how we get the automorphisn $\phi$. The following is a commutative
diagram of modules where the front square and the back square are  Cartesian.

 \[
\xymatrix{
	R\oplus P \ar@{->}[rr]\ar@{->}[rd]^{\phi}\ar@{->}[dd] &&R_s\oplus P_s \ar@{->}[dd]\ar@{->}[rd]^{\phi_s}
	\\
	&R\oplus P \ar@{->}[rr]	\ar@{->}[dd] &&R_s\oplus P_s \ar@{->}[dd]
	\\
	R_t\oplus P_t \ar@{->}[rr]\ar@{->}[rd]^{\phi_t}& & R_{st}\oplus P_{st}\ar@{->}[rd]^{\phi_{st}}     
	\\
 &R_t\oplus P_t \ar@{->}[rr] && R_{st}\oplus P_{st}     
}
\]
We get the homomorphism $\phi$ by the universal property of the Cartesian square. We observe that $\phi$
is  locally an isomorphism and it sends $(b,p)$ locally to $(1,0)$. Hence we conclude that $\phi$ is an isomorphism
and it sends $(b,p)$ to $(1,0)$.
$\hfill \gj$
\end{proof}

\begin{lemma}\label{lem:splitting}
	Continuing with the notations as in Lemma \ref{lem:criteria},  we further assume that $P_s$ is free.
	Then we can write  $\eta=(\eta_2)_t(\eta_1)_s$, where $\eta_2 = \bigl( \begin{smallmatrix} 1 & 0 \\ *_2 & \gt_s
	\end{smallmatrix} \bigr) \in \Aut(R_{s}\op P_{s})$ and  $\eta_1=  \bigl( \begin{smallmatrix} 1 & 0 \\ *_1 & \gt_t
	\end{smallmatrix} \bigr) \in \Aut(R_{t}\op P_{t})$ i.e. $(1,0)\eta_i=(1,0)$ for $i=1,2$.
	
	\end{lemma}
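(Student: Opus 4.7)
The plan is to split $\eta$ into two triangular pieces by tackling the ``cross term'' and the ``diagonal'' block separately. By Lemma~\ref{lem:criteria}(1) we already know that, under our conventions,
\[
\eta = \begin{pmatrix} 1 & 0 \\ \alpha & \theta \end{pmatrix} \in \Aut(R_{st}\oplus P_{st}),
\]
where $\alpha$ is a Hom-element and $\theta\in\Aut(P_{st})$. Correspondingly I would look for
$\eta_2 = \bigl(\begin{smallmatrix} 1 & 0\\ *_2 & \gt_s\end{smallmatrix}\bigr)$, $\eta_1 = \bigl(\begin{smallmatrix} 1 & 0\\ *_1 & \gt_t\end{smallmatrix}\bigr)$ where the ``$*$''s split the cross term and $\gt_s,\gt_t$ multiplicatively split~$\theta$.

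For the cross term I would invoke Mayer--Vietoris on the affine cover $\{D(s),D(t)\}$ of $\Spec R$: because the \v{C}ech $H^1$ of a quasi-coherent sheaf vanishes, the sequence $0\to P\to P_s\oplus P_t\to P_{st}\to 0$ (and likewise for $P^*$) is exact, so $\alpha = \alpha_2|_t + \alpha_1|_s$ for some $\alpha_2\in$ the appropriate module over $R_s$ and $\alpha_1$ over $R_t$. Since ``$\Gamma$-matrices'' form an additive subgroup, i.e.\ $\Gamma_{q+q'}=\Gamma_q\Gamma_{q'}$, the unipotent factor of $\eta$ splits on the nose as a product of localisations, and this will produce the ``$*_2$'' and ``$*_1$'' pieces of $\eta_2$ and $\eta_1$.

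For the diagonal part I would exploit that $P_s$ is free. Writing $P_s\cong R_s^n$ makes $\sigma_1\in GL_{n+1}(R_s)$ an honest invertible matrix; since $(1,0)\sigma_1^{-1}=(b,p)_s$, the block decomposition is $\sigma_1^{-1} = \bigl(\begin{smallmatrix} b_s & p_s\\ X & Y\end{smallmatrix}\bigr)$ with $X\in P_s^*$ and $Y\in\End(P_s)$. Treating $\sigma_2$ abstractly with lower-right block $V\in\End(P_t)$ and upper-right block $n\in P_t$, the identity $\eta = (\sigma_1^{-1})_t(\sigma_2)_s$ gives a concrete expression for $\theta$ as a sum of an $R_s$-side contribution composed with an $R_t$-side contribution. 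Using $P_s$-freeness I would then build $\gt_s \in \Aut(P_s)$ from the $Y,X$ data and $\gt_t \in \Aut(P_t)$ from the $V,n$ data so that $(\gt_s)_t\cdot(\gt_t)_s = \theta$, in the spirit of the Mayer--Vietoris / Amit Roy type decomposition invoked in the proof of Theorem~\ref{thm:unimodular}. Assembling this with the cross-term splitting yields the required $\eta_2$ and $\eta_1$.

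The hard step is the multiplicative splitting of $\theta$: writing an element of $\Aut(P_{st})$ as $(\gt_s)_t(\gt_t)_s$ is in general obstructed by $K_1$-type Mayer--Vietoris considerations and is not available for arbitrary $\theta$. The point is that our $\theta$ is not arbitrary: it arises from the patching $\eta = (\sigma_1^{-1})_t(\sigma_2)_s$, which glues $R_s\oplus P_s$ and $R_t\oplus P_t$ back to $R\oplus P$, so its class in $K_0(R)$ is trivial. Freeness of $P_s$ converts this triviality into a concrete matrix identity, giving enough flexibility in $GL_n(R_s)=\Aut(P_s)$ to absorb the obstruction and realise the factorisation on the nose.
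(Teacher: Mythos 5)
Your reduction to two separate splittings --- an additive one for the cross term and a multiplicative one for the diagonal block $\gt\in\Aut(P_{st})$ --- correctly isolates where the difficulty lies, and the additive part (partial fractions, i.e.\ exactness of $0\to P\to P_s\op P_t\to P_{st}\to 0$, together with $\Gamma_{q+q'}=\Gamma_q\Gamma_{q'}$) is fine. But the argument you offer for the multiplicative splitting of $\gt$ does not close. What the patching data actually gives is that the module $M(\eta)$ obtained by gluing $R_s\op P_s$ and $R_t\op P_t$ along $\eta$ is isomorphic to $R\op P$ (because $\eta=(\gs_1)_t^{-1}(\gs_2)_s$ is a product of localized automorphisms), and block-triangularity gives $M(\eta)\cong R\op M(\gt)$; hence $R\op M(\gt)\cong R\op P$, i.e.\ $M(\gt)$ is only \emph{stably} isomorphic to $P$. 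The factorization $\gt=(\gt_s)_t(\gt_t)_s$ is equivalent to a genuine isomorphism $M(\gt)\cong P$, and passing from stable isomorphism to isomorphism is precisely the cancellation statement this lemma is being used to prove in Theorem \ref{thm:canc}; freeness of $P_s$ does not by itself ``absorb the obstruction''. So the key step, as written, is circular. A secondary gap: even granting both splittings, your factorization of $\eta$ is a product of four alternately localized triangular factors, and commuting an $R_t$-factor past an $R_s$-factor so as to collapse them into the required two-factor form $(\eta_2)_t(\eta_1)_s$ conjugates the cross terms by automorphisms defined only over $R_{st}$, destroying the localization pattern; this recombination is not addressed.

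The paper never splits $\gt$ on its own. It applies a Quillen--Roy type splitting to the whole automorphism $\eta$ at once: one exhibits an isotopy $\alpha(Z)\in\E(R_{st}[Z]\op P_{st}[Z])$ with $\alpha(0)=Id$, $\alpha(1)=\eta$ and $(1,0)\alpha(Z)=(1,0)$ (available because $\eta$ is built from elementary automorphisms in the intended application), and Quillen's splitting lemma --- equivalently the explicit computations in \cite[Proposition 3.1 and Corollary 3.2]{Roy82} --- then factors $\eta=(\eta_1)_t(\eta_2)_s$ with the condition $(1,0)\eta_i=(1,0)$ inherited from the isotopy. To salvage your approach you would need to replace the $K_0$-triviality argument by such an isotopy-to-identity input.
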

\begin{proof}
	By \cite[Corollary 3.2]{Roy82}, we can write
	$\eta=(\eta_1)_t(\eta_2)_{s}$ such that $\eta_1\in \Aut(R_s\op P_{s})$ and $\eta_2\in \Aut(R_t\op P_t)$.  Since $(1,0)\eta=(1,0)$,  the explicit computations of $\eta_1$, $\eta_2$ in the proof of \cite[Proposition 3.1]{Roy82} can be modified suitably to get $\eta=(\eta_1)_t(\eta_2)_{s}$
  such that	$(1,0)\eta_1=(1,0)$ and $(1,0)\eta_2=(1,0)$.
	Hence we have  $\eta_1=\bigl( \begin{smallmatrix} 1 & 0 \\ *_1 & \gt_1
	\end{smallmatrix} \bigr) \in \Aut(R_{s}\op P_{s})$ and $\eta_2=\bigl( \begin{smallmatrix} 1 &  0\\ *_2 & \gt_2
	\end{smallmatrix} \bigr) \in\Aut(R_t\op P_t) $.
	Alternatively, one can use Quillen splitting lemma to get the required splitting as follows.
	Consider the elementary group $\E(R_{st}[Z]\oplus P_{st}[Z])$, where $Z$ is  a variable.
	It is easy to see that we can find $\alpha(Z) \in \E(R_{st}[Z]\oplus P_{st}[Z])$ with the properties
	$(1,0) \alpha(Z) = (1,0)$,  $\alpha(1) = \eta$, and $\alpha(0) = Id$. By 
	Quillen's splitting lemma \cite[Theorem 1, paragraph 2]{Qu}, for $g = (s)^N$ with large $N$, we have 
	\begin{eqnarray*}
		\alpha(Z) &=& (\alpha(Z)\alpha(gZ)^{-1})_t\alpha(gZ)_{s},
	\end{eqnarray*}
	with $\alpha(Z)\alpha(gZ)^{-1} \in Aut(R_{s}[Z]\oplus P_{s}[Z])$, $\alpha(gZ) \in \Aut(R_t[Z]\oplus P_t[Z])$. 
	Note that $(1,0)\alpha(gZ) = (1,0)$; hence $(1,0)\{(\alpha(Z)\alpha(gZ)^{-1})\}= (1,0)$. 
	Specializing $Z = 1$ gives the required splitting of $\eta = (\eta_1)_t(\eta_2)_{s}$, 
	with $\eta_1 \in \Aut(R_t\oplus P_t)$, $\eta_2 \in \Aut(R_{s}\oplus P_{s})$ such that
	$(1,0)\eta_i  = (1,0)$  for $i = 1, 2$. 
	$\hfill \gj$
	\end{proof}

\begin{theorem}\label{thm:canc}
 Let $R$ be a commutative Noetherian domain of dimension $d$ and $I$ an ideal of $R$. 
 
 $(1)$ Let $P$ be a projective module over the Rees algebra $R[It]$ such that $rank(P)>d$. 
 Let $(b,p)\in \Um(R[It]\op P)$. Then  there exists $\gs \in \Aut(R[It]\op P)$ such that $(b,p)\gs=(1,0)$.

 $(2)$  Let $P$ be a projective module over the extended  Rees algebra $R[It,t^{-1}]$ such that $rank(P)>d$. 
 Let $(b,p)\in \Um(R[It,t^{-1}]\oplus P)$. Then  there exists $\gs \in \Aut(R[It, t^{-1}]\op P)$ such that $(b,p)\gs=(1,0)$.
 \end{theorem}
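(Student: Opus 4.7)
The strategy parallels the proof of Theorem~\ref{thm:unimodular}: cover $\Spec A$ (with $A=R[It]$) by the two opens $\Spec A_s$ and $\Spec A_{1+sA}$ for a suitably chosen $s\in R$, produce on each of them an automorphism carrying $(b,p)$ to $(1,0)$, and then patch these two local automorphisms into a single $\phi\in\Aut(A\op P)$. The role played there by Rao's transitivity result is played here by the criterion and splitting of Lemmas~\ref{lem:criteria} and~\ref{lem:splitting}. Part~(2) is proved by the same argument, so I sketch only part~(1).

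First, reduce to the case $0\neq I\neq R$ and $rank(P)=d+1$: the cases $I=0$ (Bass/Serre) and $I=R$ (Plumstead) are classical, and $rank(P)\geq d+2>\dim A$ is covered by Bass's cancellation theorem. Now produce the two local completions. Put $S=R\setminus\{0\}$. By Lemma~\ref{mcset}, $S^{-1}A=K[t]$ where $K$ is the fraction field of $R$; over this PID, $S^{-1}P$ is free (Quillen--Suslin) and $\Um_{d+2}(K[t])$ is a single $\E_{d+2}(K[t])$-orbit by Suslin, so $S^{-1}(b,p)$ is sent to $(1,0)$ by an elementary matrix. Clearing denominators and fixing any $a\in I\setminus\{0\}$ yields some $s_0\in S$ such that, with $s:=as_0\in R$, we have $A_s=R_s[t]$ (Lemma~\ref{mcset} again, since $a$ becomes a unit), $P_s$ is free over $A_s$, and some $\sigma_1\in\Aut(A_s\op P_s)$ satisfies $((b,p)_s)\sigma_1=(1,0)$. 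Since $s\in R$ is a non-zero-divisor of the domain $A$, Lemma~\ref{lem:gen-dim-res}(2) produces $\sigma_2\in\Aut(A_{1+sA}\op P_{1+sA})$ with $((b,p)_{1+sA})\sigma_2=(1,0)$.

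To patch, note that $\Spec A=\Spec A_s\cup\Spec A_{1+sA}$, so the square with corners $A,\,A_s,\,A_{1+sA},\,A_{s(1+sA)}$ is a localization Cartesian square. Set $\eta:=(\sigma_1)_{1+sA}^{-1}(\sigma_2)_s\in\Aut(A_{s(1+sA)}\op P_{s(1+sA)})$; then $(1,0)\eta=(1,0)$, so Lemma~\ref{lem:criteria}(1) puts $\eta$ in block form $\eta=\bigl(\begin{smallmatrix}1&0\\ *&\tau\end{smallmatrix}\bigr)$ with $\tau\in\Aut(P_{s(1+sA)})$. Because $P_s$ (hence $P_{s(1+sA)}$) is free, Lemma~\ref{lem:splitting} splits $\eta=(\eta_2)_{1+sA}(\eta_1)_s$ with each $\eta_i$ still fixing $(1,0)$, and Lemma~\ref{lem:criteria}(2) then assembles the required $\phi\in\Aut(A\op P)$ with $(b,p)\phi=(1,0)$.

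The delicate point -- and the main obstacle -- is that Lemmas~\ref{lem:criteria} and~\ref{lem:splitting} are stated for a coprime pair $(s,t)$ in $R$, while the Cartesian square used here comes from the covering pair $(s,1+sA)$; one must check that the proofs of those two lemmas, which rely only on \cite[Corollary~3.2]{Roy82} and on Quillen's splitting lemma, apply verbatim to any such localization Cartesian square. Once this is verified, part~(2) is identical, with the PID $K[t,t^{-1}]$ replacing $K[t]$ at the generic fibre.
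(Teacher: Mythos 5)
Your argument is essentially the paper's own proof: the same reduction to $0\neq I\neq R$ and rank $d+1$, the same localization Cartesian square on the pair $(as,\,1+asA)$, and the same use of Lemma~\ref{lem:criteria} and Lemma~\ref{lem:splitting} to patch the two local automorphisms; the only cosmetic difference is that you produce $\sigma_1$ by spreading out an elementary matrix from the generic fibre $K[t]$, whereas the paper applies Suslin's transitivity theorem directly over $R_{as}[t]\cong R[It]_{as}$. The coprimality caveat you flag is genuine but is exactly how the paper itself invokes those lemmas (with the multiplicative set $1+asR[It]$ playing the role of $t$), and the underlying inputs (Roy's splitting and Quillen's lemma) do apply to such covers, so no further work is needed.
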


\begin{proof}
	
	Let $A=R[It]$. Since $R$ is a domain, so is $R[t]$. Since $A$ is a subring of $R[t]$, we conclude
	that $A$ is a domain.
	If $I=(0)$, then $A=R$. In this case, the result follows from a  result of Bass \cite{Bass64}. 
	If $I=(1)$, then $A=R[t]$. In this case, the result follows from \cite[Corollary 2]{P}. So, we assume that $(0)\neq I\neq (1)$.
	 Note that in this case $\dim(A)= d+1$. Hence in the view of a classical
	result of Bass \cite{Bass64}, we only have to consider the case when the rank of $P$ is $d+1$.

	Let $S$ be the set of all non-zero-divisors of $R$. Then using Lemma \ref{mcset},  we have $S^{-1}R[It]\cong R'[t]$ where $R'$ is
	a field. Therefore $S^{-1}P$ is a free $S^{-1}A$-module. Since $P$ is finitely generated, there exists $s\in S$ such that $P_s$ is a free $A_s$-module.

 Let $(b,p)\in \Um(R[It]\op P)$ and $a$ a non-zero non-unit element of $I$. We have already observed the following Cartesian square of rings in the proof of  Theorem \ref{thm:unimodular}
 \[
   	\xymatrix{
   		R[It] \ar@{->}[r]^{p_1}
   		\ar@{->}[d]^{p_2}
   		&R[It]_{as}\cong R_{as}[t]
   		\ar@{->}[d]^{j_1}
   		\\
   		R[It]_{1+asR[It]} \ar@{->}[r]^{j_2}
   		&(R_{as}[t])_{1+asR[It]}.      
   	}
   	\]
  Let $T:=1+asR[It]$.  Since $P_{as}$ is free,  using a result of Suslin \cite[Theorem 2.6]{Su77},  there exists $\gs_1\in \E(R[It]_{as}\op P_{as})$ such that
  $((b,p)_{as})\gs_1=(1,0)$ for $d\geq 1$. For $d=0$, $R$ becomes a field. Also in this case it is easy to see that  
   there exists $\gs_1\in \E(R[It]_{as}\op P_{as})$ such that
  $((b,p)_{as})\gs_1=(1,0)$. 
  By   Lemma \ref{lem:gen-dim-res}, there exists
  $\gs_2\in \E(R[It]_T\op P_T)$ such that  $((b,p)_T)\gs_2=(1,0)$.  Let $\eta=(\gs_1)_T^{-1}(\gs_2)_{as}$.
  Since $(1,0)\eta=(1,0)$,
  $\eta= \bigl( \begin{smallmatrix} 1 & 0 \\ * & \gt
\end{smallmatrix} \bigr)$, where $\gt \in \GL_{d+1}(B)$, where $B=(R_{as}[t])_{1+asR[It]}$.
In fact $\gt \in \SL_{d+1}(B)$, since $\eta$ is elementary. 

By Lemma \ref{lem:splitting}, we have $\eta = (\eta_1)_T(\eta_2)_{as}$, 
with $\eta_1 \in \Aut(R[It]_T\oplus P_T)$, $\eta_2 \in \Aut(R_{as}[t]\oplus P_{as})$ such that
$(1,0)\eta_i  = (1,0)$  for $i = 1, 2$. 
Now by Lemma \ref{lem:criteria}, we will get an automorphism $\phi\in \Aut(R[It]\op P)$ such that $(b,p)\phi=(1,0)$.
This completes the proof of $(1)$.

$(2)$ The proof of the second part is a verbatim copy of the first part. But one needs to use the cancellation result
for the Laurent extensions (see \cite[Theorem 7.2]{Su77}).
$\hfill \gj $
\end{proof}

\section{Applications}

The following is the Eisenbud--Evans estimate on the number of generators of a module over the (extended) Rees algebras.

\begin{theorem}
 Let $R$ be a commutative Noetherian domain of dimension $d$ and $I$ an ideal of $R$. Let $M$ be  a non-zero finitely
 generated module over  $A:=R[It]$ or $R[It,t^{-1}]$. Then $M$ is generated by $e(M):= Sup_{\p}\{\mu_{\p}(M)+\dim(A/\p)\}$
 elements.
\end{theorem}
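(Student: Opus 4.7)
The plan is to run the Eisenbud--Evans induction on $e(M)$, with the basic element theorem for $A$-modules supplied by the Cartesian-square patching of Theorem \ref{thm:unimodular}. Set $n := e(M)$; the base case $n = 0$ forces $M = 0$. For the inductive step I would produce an element $m \in M$ basic at every prime in the finite critical set
\[
S(M) := \{\p \in \Spec(A) : \mu_\p(M) + \dim(A/\p) = n\},
\]
since then $e(M/Am) \leq n-1$ (at each $\p \in S(M)$ basicness drops $\mu_\p$ by one, and at other primes in the support the sum $\mu_\p + \dim(A/\p)$ was already $\leq n-1$). Lifting $n-1$ generators of $M/Am$ (supplied by the inductive hypothesis) and adjoining $m$ yields $n$ generators of $M$.

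To obtain such an $m$, I would dispatch the degenerate cases $I = (0)$ and $I = (1)$ by invoking the classical Forster--Plumstead bound on $R$ and on $R[t]$ respectively. In the main case I choose $a \in I$ and $s \in R$ both nonzero (so that $as$ is a nonzerodivisor of $A$), and work with the Cartesian square
\[
\xymatrix{
A \ar@{->}[r] \ar@{->}[d] & A_{as} \cong R_{as}[t] \ar@{->}[d] \\
A_{1+asA} \ar@{->}[r] & (R_{as}[t])_{1+asA}
}
\]
where the isomorphism $A_{as} \cong R_{as}[t]$ comes from Lemma \ref{mcset} and one has the decomposition $\Spec(A) = \Spec(A_{as}) \cup \Spec(A_{1+asA})$. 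Plumstead's basic element theorem on $R_{as}[t]$ (of generalized dimension $\leq d$) furnishes $m_1 \in M_{as}$ basic at every $\p \in S(M)$ not containing $as$, while Lemma \ref{lem:gen-dim-res}(1) furnishes $m_2 \in M_{1+asA}$ basic at every $\p \in S(M)$ containing $as$.

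The two local elements then have to be glued: after modifying $m_1$ and $m_2$ by elements of the appropriate denominator ideals so that their images in $M_{as(1+asA)}$ coincide, via the Cartesian-square patching of Theorem \ref{thm:unimodular} in the spirit of Lemma \ref{lem:splitting}, one obtains $m \in M$ basic at every $\p \in S(M)$. The main obstacle is this patching step: Theorems \ref{thm:unimodular} and \ref{thm:canc} patch unimodular vectors in a projective module, whereas here the patching has to be carried out in an arbitrary finitely generated module. The saving grace is that basicness at a finite set of primes is a Zariski-open condition, so once the two local candidates are matched modulo a suitable ideal one can perturb freely within that ideal (an Eisenbud--Evans style prime avoidance) without destroying basicness at any $\p \in S(M)$; making this precise is the main technical point.
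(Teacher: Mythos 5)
Your route is genuinely different from the paper's. The paper does not induct on $e(M)$ at all: it presents $M$ as $A^n/K$ with $n>e(M)$, uses that $K$ is torsion-free and that $S^{-1}A\cong k[t]$ is a PID (so $K_s$ is $A_s$-free for some non-zero-divisor $s$), produces an element $x\in K$ that is unimodular in $A^n$ by combining this with Lemma \ref{lem:plum} over $A_{1+sA}$ and patching, writes $A^n\cong xA\op P$, observes that $P$ still surjects onto $M$ because $x\in K$, and then invokes the cancellation theorem (Theorem \ref{thm:canc}) to conclude $P\cong A^{n-1}$. The entire point of the paper's ``application'' is that it is routed through Theorem \ref{thm:canc}; your argument uses neither cancellation nor, in the end, any property of Rees algebras.

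The step you flag as ``the main technical point'' is where your write-up actually breaks, but for a reason you can turn to your advantage. Gluing two local basic elements $m_1\in M_{as}$ and $m_2\in M_{1+asA}$ of an arbitrary finitely generated module is \emph{not} something the paper's Cartesian-square technique delivers: Lemmas \ref{lem:criteria} and \ref{lem:splitting} match the two local data by (elementary) automorphisms, and a non-projective $M$ has no comparable supply of automorphisms, so ``matching modulo an ideal and perturbing'' does not go through as stated --- this is precisely why the paper moves the patching problem into the free module $A^n$ via the kernel $K$. However, for your induction none of this is needed: once one checks that every $\p\in S(M)$ is minimal in $\{\mathfrak q:\mu_{\mathfrak q}(M)\geq \mu_{\p}(M)\}$ (otherwise a smaller prime would exceed the sup), $S(M)$ is finite, and an element of $M$ basic at a prescribed finite set of primes of $\mathrm{Supp}(M)$ exists over \emph{any} Noetherian ring by the standard prime-avoidance lemma for basic elements --- no localization, no Cartesian square. (One should also take the sup defining $e(M)$, and hence $S(M)$, over $\mathrm{Supp}(M)$ only, or the inductive step fails at primes where $\mu_\p(M)=0$.) With that repair your induction closes and is correct, but it then proves the classical Forster bound for all Noetherian rings; the Rees-algebra machinery, and in particular a \emph{global} basic element statement such as Lemma \ref{lem:gen-dim-res}, only becomes genuinely necessary for the sharper Eisenbud--Evans bound in which $\dim(A/\p)$ is capped at $d$, where the relevant set of primes is no longer finite --- and that is the situation the paper's kernel-plus-cancellation argument is built to handle.
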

\begin{proof}
 Suppose $M$ is generated by $n$ elements such that $n>e(M)>d$. Then it is enough to prove that $M$ 
 is generated by $(n-1)$ elements. Since $M$ is generated by $n$ elements, we have the following
 surjective map $A^n\ra M\ra 0$. Let $K$ be the kernel of this map. Since $A$ is Noetherian, $K$ is finitely generated. Also, $K$ is a torsion-free module since $K$ is a submodule of $A^n$.
  Let $S$ be the  set of all non-zero divisors of $R$. Then $S^{-1}A
  \cong k[t]$, where $k$ is a field. 
Since $S^{-1}K$ is a torsion-free module over the PID $k[t]$, by the structure theorem of finite generated modules over a PID, we get
that $S^{-1}K$ is $S^{-1}A$-free. Since $K$ is finitely generated, there exists $s\in S$ such
that $K_s$ is $A_s$-free. 

Let $x_1$ be a basic element of $K_s$. We set $T=1+sA$. By  Lemma \ref{lem:plum}, we get
that $K_T$ contains a basic element $x_2$  which is a unimodular element in $A^n_T$. Now patching $x_1$ and $x_2$, we get a basic element $x$ in $K$ which is a unimodular element $x\in A^n$.
 Hence we can write $A^n\cong xA\op P$. Now we observe that locally $P$ 
surjects onto $M$. Hence $P$ surjects onto $M$ globally. Note that rank of $P\geq n-1>d$. Hence by the cancellation result Theorem \ref{thm:canc}, we have 
$P\cong A^{n-1}$. Therefore $M$ is generated by $n-1$ elements. This finishes the proof.  
$\hfill \gj$

\end{proof}

The following is a ${\rm K}_1$-analog of the above results.

\begin{theorem}
 Let $R$ be a commutative domain of dimension $d$ and $I\subset R$ an ideal of $R$. Then for $n\geq max\{3,d+2\}$, the
 natural map $\phi: \GL_n(R[It])/\E_n(R[It])\ra {\rm K}_1(R[It])$ is an isomorphism.
\end{theorem}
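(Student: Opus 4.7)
The plan is to combine the unimodular-element and cancellation results of Theorems \ref{thm:unimodular} and \ref{thm:canc} with standard $K_1$-stability machinery on the two pieces of the Cartesian square that has organised the paper. First I would reduce to the non-trivial case $(0) \neq I \neq (1)$: for $I=(0)$ one has $A=R$ and classical Bass $K_1$-stability applies at the bound $\max\{3,d+2\}$, and for $I=R$ one has $A=R[t]$ and Suslin's $K_1$-stability theorem for polynomial rings gives the isomorphism at the same bound.

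In the remaining case, pick a non-zero non-unit $a\in I$ and set $T:=1+aA$. Work with the Cartesian square
\[
\xymatrix{
A \ar[r] \ar[d] & A_a \cong R_a[t] \ar[d] \\
A_T \ar[r] & (R_a[t])_T
}
\]
used in the proofs of Theorems \ref{thm:unimodular} and \ref{thm:canc}. Since $\dim(R_a)\leq d$, Suslin's $K_1$-stability for polynomial rings gives $\GL_n(A_a)/\E_n(A_a) \iso {\rm K}_1(A_a)$ for $n\geq\max\{3,d+2\}$. On $A_T$, Lemma \ref{lem:gen-dim} applied to $A$ (of Krull dimension $\leq d+1$) yields generalized dimension $\leq d$, and Lemma \ref{lem:gen-dim-res}(2) then gives transitivity of the $\E_m(A_T)$-action on $\Um_m(A_T)$ for $m\geq d+2$. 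Combined with Suslin's normality of $\E_n$ for $n\geq 3$ and Bass--Vaserstein $K_1$-stability, this yields $\GL_n(A_T)/\E_n(A_T) \iso {\rm K}_1(A_T)$ for $n\geq\max\{3,d+2\}$, and similarly for the fourth corner.

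For surjectivity of $\phi$, given $\alpha\in\GL_m(A)$ with $m>n$, the last column $v=\alpha e_m\in \Um_m(A)$ can be moved to $e_m$ by elementary matrices over each of $A_a$ and $A_T$; the two local elementary matrices patch, via Lemmas \ref{lem:criteria} and \ref{lem:splitting}, into an element of $\E_m(A)$ that reduces $\alpha$ modulo $\E_m(A)$ to block form $\alpha'\op 1$ with $\alpha'\in\GL_{m-1}(A)$, and one iterates down to $n$. For injectivity, given $\alpha\in\GL_n(A)$ with $[\alpha]=0$ in ${\rm K}_1(A)$, local stability yields $\alpha_a\in\E_n(A_a)$ and $\alpha_T\in\E_n(A_T)$; introducing a parameter $Z$, lift these to elementary homotopies $\alpha_*(Z)$ with $\alpha_*(0)=I$ and $\alpha_*(1)=\alpha_*$, then apply Quillen's splitting lemma to the transition $\alpha_a(Z)_T(\alpha_T(Z)_a)^{-1}\in\E_n(A_{aT}[Z])$, exactly as in the alternative argument in the proof of Lemma \ref{lem:splitting}, to assemble a global factorisation witnessing $\alpha\in\E_n(A)$.

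The hard part will be this injectivity patching: the local elementary factorisations over $A_a$ and $A_T$ do not glue naively, so Quillen's splitting lemma (together with Suslin's normality) is the essential technical ingredient, playing the same role here as in Lemma \ref{lem:splitting}. A minor subtlety is that the statement drops the Noetherian hypothesis present earlier in the paper, whereas Lemmas \ref{lem:gen-dim}--\ref{lem:gen-dim-res} are invoked under Noetherianity; one presumably either reads ``Noetherian'' as implicit, or extends the generalized-dimension lemmas to the general case before running the argument above.
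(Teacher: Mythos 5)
Your surjectivity argument is essentially the paper's (the paper simply says surjectivity follows from Theorem \ref{thm:canc}), and your local stability estimates on $A_{as}\cong R_{as}[t]$ and $A_{1+asA}$ are reasonable. The problem is the injectivity step, where your argument has a genuine gap. You construct two \emph{independent} local homotopies $\ga_a(Z)\in \E_n(A_a[Z])$ and $\ga_T(Z)\in \E_n(A_T[Z])$ and propose to glue them by applying Quillen's splitting lemma to $\gamma(Z)=\ga_a(Z)_T\,\ga_T(Z)_a^{-1}$. But Quillen splitting only normalizes the factorization at $Z=0$: writing $\gamma(Z)=\gb(Z)_T\,\gd(Z)_a$, there is no control on $\gb(1)$ and $\gd(1)$ individually (only on the product, which is $\gamma(1)=I$). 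Consequently the patched matrix $\gs(Z)$ with $\gs(Z)_a=\gb(Z)^{-1}\ga_a(Z)$, $\gs(Z)_T=\gd(Z)\ga_T(Z)$ satisfies $\gs(0)=I$ but $\gs(1)\neq \ga$ in general; what you learn is that $\gs(1)\ga^{-1}$ is again only \emph{locally} elementary, which is where you started. This is not a technicality: ``elementary after localizing at $a$ and at $1+aA$'' does not imply ``elementary'' without a \emph{global} homotopy from $\ga$ to the identity, and your construction never produces one.

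The paper supplies exactly this missing ingredient from the grading of the Rees algebra, and then runs a different local--global principle. Write $A=A_0\op A_+$ with $A_0=R$, $A_+=It\op I^2t^2\op\cdots$. Since $[\ga]=0$ in ${\rm K}_1(A)$ implies $[\ga \bmod A_+]=0$ in ${\rm K}_1(R)$ and $\dim R=d$, Bass stability over $R$ lets one multiply $\ga$ by elementary matrices so that $\ga\in\GL_n(A,A_+)$; the substitution $x\mapsto xZ^{\deg x}$ on homogeneous elements then gives a canonical global homotopy $\ga(Z)\in\GL_n(A[Z])$ with $\ga(0)=I$, $\ga(1)=\ga$. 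The paper then localizes at primes $\p$ of $R$ (not over a two-element cover of $\Spec A$), uses that the generalized dimension of $R_\p[I_\p t]$ is $\leq d$ to get $\ga_\p\in\E_n(R_\p[I_\p t])$, and invokes Gubeladze's graded local--global principle \cite[Proposition 7.3]{G2} to conclude $\ga\in\E_n(A)$. If you want to keep your two-piece Cartesian square, you would still need to first perform the reduction to $\ga\equiv I \bmod A_+$ and carry the resulting global homotopy through the patching; as written, your proof of injectivity does not go through.
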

\begin{proof}
 Surjectivity follows from Theorem \ref{thm:canc}. We have to prove injectivity i.e. stably elementary
 is elementary. We grade $A:=R[It]$ as $A=R\op It\op I^2t^2\op \cdots$, where $A_0=R$ and $A_+=It\op I^2t^2\op \cdots$.
 Let $\ga\in \GL_n(A)$ which is stably elementary. Multiplying by elementary matrices, we can assume that 
 $\ga\in \GL_n(A,A_+)$. Let $\p \in \Spec(R)$. Note that the generalized dimension of $R_{\p}[I_{\p}t]\leq d$.
  By usual estimation, we get $\ga_{\p}\in \E_n(R_{\p}[I_{\p}t])$. Then by following Gubeladze \cite[Proposition 7.3]{G2},
we get  $\ga\in \E_n(R[It])$. This completes the proof.
 $\hfill \gj$
\end{proof}

{\small
{}

}

\end{document}